\numberwithin{equation}{section}
\newtheorem{thm}{Theorem}[section]
\newtheorem{lem}[thm]{Lemma}
\theoremstyle{definition}
\newcounter{countcases}
\newcommand{\PBS}[1]{\let\temp=\\#1\let\\=\temp}  % for tables
\numberwithin{figure}{section}
\begin{document}
\author{Brendon Rhoades}
\address{Brendon Rhoades, Department of Mathematics, Massachusetts Institute of Technology, Cambridge, MA, 02139}
\email{brhoades@math.mit.edu}
\title[Enumeration of Connected Catalan Objects by Type]
{Enumeration of Connected Catalan Objects by Type}

\bibliographystyle{../dart}

\date{\today}

\begin{abstract}
Noncrossing set partitions, nonnesting set partitions, Dyck paths,
and rooted plane trees are four classes of Catalan objects which carry
a notion of type.  There exists a product formula which enumerates these
objects according to type.  We define a notion of `connectivity' for 
these objects and prove an analogous product formula which counts
connected objects by type.  
Our proof of this product formula is combinatorial and bijective.
We extend this to a product formula which
counts objects with a fixed type and number of connected components.
We relate our product formulas to symmetric functions
arising from parking functions.  We close by presenting an alternative
proof of our product formulas communicated to us by Christian
Krattenthaler \cite{KrattComm} which uses generating functions and Lagrange
inversion.
\end{abstract}
\maketitle
\section{Introduction}

The \emph{Catalan numbers} $C_n = \frac{1}{n+1}{2n \choose n}$ are 
among the most important sequences of numbers in combinatorics.  To name
just a few examples (see \cite{StanEC2} for many more), the number
$C_n$ counts 123-avoiding 
permutations in $\mathfrak{S}_n$, 
Dyck paths of length $2n$, standard Young
tableaux of shape $2 \times n$, noncrossing or nonnesting
set partitions of $[n]$, and rooted plane trees with $n+1$ vertices.  

Certain families of Catalan objects
come equipped with a natural notion of \emph{type}.  For example, the 
type of a noncrossing set partition of $[n]$ is the sequence
$\mathbf{r} = (r_1, \dots, r_n)$, 
where $r_i$ is the number of blocks of size $i$.  
In the cases of noncrossing/nonnesting set partitions of $[n]$, Dyck paths
of length $2n$, and plane trees on $n+1$ vertices, there exists a nice 
product formula (Theorem 1.1) which counts Catalan objects with fixed type
$\mathbf{r}$.  These four classes of Catalan objects also carry a notion
of \emph{connectivity}.  In this paper we give a product formula which
counts these objects with a fixed type and a fixed number of connected
components.

\begin{figure}
\centering
\includegraphics[scale=.6]{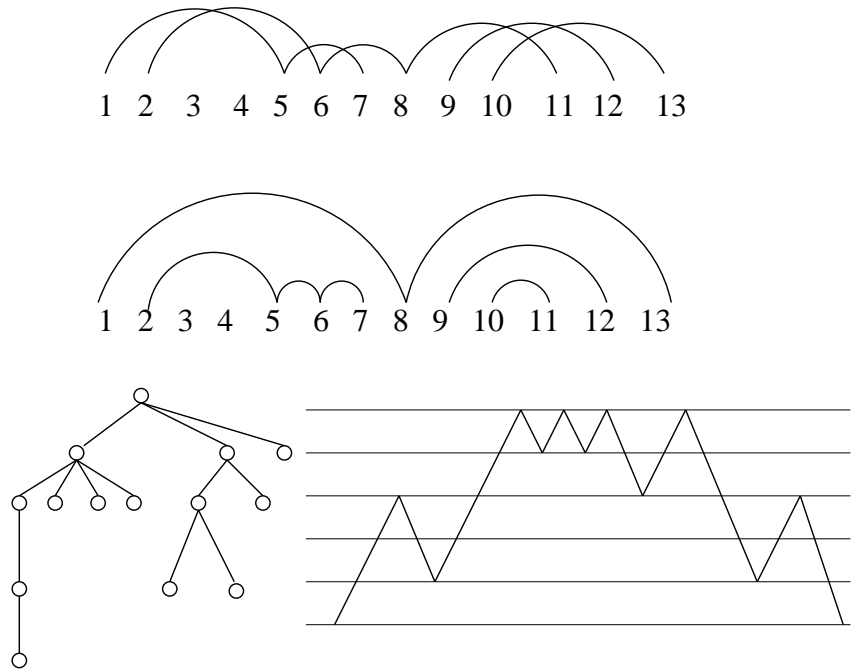}
\caption{A connected nonnesting partition of $[13]$,
a connected noncrossing partition of $[13]$, a plane tree with $14$ 
vertices with a terminal rooted twig, and a Dyck path of length $26$ with
no returns}
\end{figure}

The \emph{bump diagram} of a set partition $\pi$ of $[n]$ 
is obtained by drawing
the numbers $1$ through $n$ in a line and drawing an arc between $i$ and $j$
with $i < j$ if $i$ and $j$ are blockmates in $\pi$ and there does not
exist $k$ with $i < k < j$ such that $i,k,$ and $j$ are blockmates in $\pi$.
The set partition $\pi$ is \emph{noncrossing} if the bump diagram of $\pi$
has no crossing arcs or, equivalently, if there do not exist $a < b < c < d$
with $a,c$ in a block of $\pi$ and $b,d$ in a different block of $\pi$.
Similarly, the set partition $\pi$ is \emph{nonnesting} if the bump diagram
of $\pi$ contains no nesting arcs, that is, no pair of arcs of the form 
$ad$ and $bc$ with $a < b < c < d$.  As above, the \emph{type} of any 
set partition $\pi$ of $[n]$ is the sequence $(r_1, \dots, r_n)$, where
$r_i$ is the number of blocks in $\pi$ of size $i$.  
The set partition $\pi$ is called \emph{connected} if there does not 
exist an index $i$ with $1 \leq i \leq n-1$ such that there are no
arcs connecting the intervals $[1,i]$ and $[i+1,n]$ in the bump 
diagram of $\pi$.  The set partition $\pi$ is said to have 
\emph{$m$ connected components} if there exist numbers
$1 \leq i_1 < i_2 < \dots < i_{m-1} \leq n$ such that the restriction of 
the bump diagram of $\pi$ to each of the intervals 
$[1,i_1], [i_1+1,i_2], \dots, [i_{m-1}+1,n]$ is a connected set partition.

The bump diagram
of the noncrossing partition 
$\{1, 8, 13 / 2, 5, 6, 7 / 3 / 4 / 9, 12 / 10, 11 \}$ of $[13]$ with type
$(2,2,1,1,0,\dots,0)$ is shown in the middle of Figure 1.1.  The 
bump diagram of the nonnesting partition 
$\{1, 5, 7 / 2, 6, 8, 11 / 3 / 4 / 9,  12 / 10, 13 \}$ of $[13]$ with type
$(2,2,1,1,0,\dots,0)$ is shown in the top of Figure 1.1.  Both of 
these set partitions are connected.  The set partition
$\{ 1, 4 / 2, 3 / 5 / 6, 7, 8 \}$ is a noncrossing partition of $[8]$ with
$3$ connected components and type $(1,2,1,0,\dots,0)$.

A \emph{Dyck path} of length $2n$ is a lattice path in $\mathbb{Z}^2$
starting at $(0,0)$ and ending at $(2n, 0)$ which contains steps of 
the form $U = (1,1)$ and $D = (1,-1)$ and never goes below the 
$x$-axis.  An \emph{ascent} in a Dyck path is a maximal sequence
of $U$-steps.  The \emph{ascent type} of a Dyck path of length $2n$ is
the sequence $(r_1, \dots, r_n)$, where $r_i$ is the number of 
ascents of length $i$.  
A \emph{return} of a Dyck path of length $2n$ is a lattice point
$(m,0)$ with $0 < m < 2n$ which is contained in the Dyck path.

The ascent type of the Dyck path of length $26$
shown on the lower right of Figure 1.1 is $(2,2,1,1,0,\dots,0)$.  This
Dyck path has no returns.  The Dyck path
$UUDDUDUUDUDD$ has length $12$, ascent type $(2,2,0,\dots,0)$, and
$2$ returns.

A \emph{(rooted) plane tree} is a graph $T$ defined recursively as follows.
A distinguished vertex is called the 
\emph{root} of $T$ and the vertices of $T$
excluding the root are partitioned into an 
\emph{ordered} list of $k$ sets 
$T_1, \dots, T_k$, each of which is a plane tree.
Given a plane
tree $T$ on $n+1$ vertices, the \emph{downdegree sequence} of $T$ is the 
sequence $(r_0, r_1, \dots, r_n)$, where $r_i$ in the number of vertices
$v \in T$
with $i$ neighbors
further from the root than $v$.  
If $T$ is a plane tree with $n+1$ vertices, there
exists a labeling of the vertices of $T$ with $[n+1]$ called \emph{preorder}
(see \cite{StanEC1} for the precise definition).  The plane tree
$T$ with $n+1$ vertices is said to have a \emph{terminal rooted twig} if 
the vertex labeled $n+1$ is attached to the root.  A \emph{plane forest}
$F$ is an \emph{ordered} list of plane trees $F = (T_1, \dots, T_k)$.
The \emph{downdegree sequence} of a plane forest $F$ is the sum of
the downdegree sequences of its constituent trees.

The downdegree sequence of the plane tree with $14$ 
vertices shown on the lower left of Figure 1.1 is
$(8,2,2,1,1,0,\dots,0)$.  This plane tree has a terminal rooted 
twig. 

In order to avoid enforcing conventions such as $\frac{(-1)!}{(-1)!} = 1$ 
in the `degenerate' cases of our product formulas, we adopt the 
following notation of Zeng \cite{Zeng}.  Given any vectors 
$\mathbf{r} = (r_1, \dots, r_n), \mathbf{v} = (v_1, \dots, v_n) \in \mathbb{N}^n$, set
$| \mathbf{r} | := r_1 + \cdots + r_n$,
$\mathbf{r}! := r_1! r_2! \cdots r_n!$, and
$\mathbf{r} \cdot \mathbf{v} := r_1 v_1 + \cdots + r_n v_n$.
Let $x$ be a variable and for any 
vectors $\mathbf{r}, \mathbf{v} \in \mathbb{N}^n$ let 
$A_{\mathbf{r}}(x;\mathbf{v}) \in \mathbb{R}[x]$ be the polynomial
\begin{equation}
A_{\mathbf{r}}(x;\mathbf{v}) = 
\frac{x}{x + \mathbf{r} \cdot \mathbf{v}} 
\frac{(x + \mathbf{r} \cdot \mathbf{v})_{|\mathbf{r}|}}{\mathbf{r}!},
\end{equation}
where $(y)_k = y (y-1) \cdots (y-k+1)$ is a falling factorial.  Zeng
used the polynomials $A_{\mathbf{r}}(x;\mathbf{v})$ to prove various
convolution identities involving multinomial coefficients.

\begin{thm}
Let $n \geq 1$, let $\mathbf{v} = (1,2,\dots, n)$, and suppose 
$\mathbf{r} = (r_1, \dots, r_n) \in \mathbb{N}^n$ satisfies 
$\mathbf{r \cdot v} = n$.  

The polynomial evaluation
$A_{\mathbf{r}}(1; \mathbf{v}) = [A_{\mathbf{r}}(x; \mathbf{v})]_{x = 1}$
is equal to 
\footnote{This polynomial evaluation can also be expressed as
$\frac{n!}{(n-|\mathbf{r}|+1)!\mathbf{r}!}$.}
the cardinality of: 
\\
1.  the set of noncrossing partitions of $[n]$ of type
$\mathbf{r}$; \\
2.  the set of nonnesting partitions of $[n]$ of type
$\mathbf{r}$; \\
3.  the set of Dyck paths of length $2n$ with ascent
type $\mathbf{r}$; \\
4.  the set of plane trees with $n+1$
vertices and with downdegree sequence 
$(n-|\mathbf{r}|+1, r_1, \dots, r_n)$. 
\end{thm}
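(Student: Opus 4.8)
The plan is to evaluate the polynomial once and then prove all four enumerations by reducing them to a single count via type-preserving bijections. First I would record the polynomial evaluation: substituting $x=1$ and using $\mathbf{r}\cdot\mathbf{v}=n$ in (1.1) gives
\[
A_{\mathbf{r}}(1;\mathbf{v})=\frac{1}{n+1}\cdot\frac{(n+1)_{|\mathbf{r}|}}{\mathbf{r}!}
=\frac{1}{n+1}\cdot\frac{(n+1)!}{(n+1-|\mathbf{r}|)!\,\mathbf{r}!}
=\frac{n!}{(n-|\mathbf{r}|+1)!\,\mathbf{r}!},
\]
which is the expression in the footnote. Setting $r_0:=n-|\mathbf{r}|+1$, this equals $\frac{n!}{r_0!\,\mathbf{r}!}=\frac{1}{n+1}\binom{n+1}{r_0,r_1,\dots,r_n}$, the form in which I will prove it.

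I would take the plane trees of part (4) as the base case and establish their count by the cycle lemma. Reading the downdegrees of a plane tree on $n+1$ vertices in preorder gives a word $d_1 d_2\cdots d_{n+1}$ with $\sum d_j=n$ and with $r_i$ letters equal to $i$, where $r_0=n-|\mathbf{r}|+1$; the associated step sequence $d_j-1$ has all proper partial sums nonnegative and total $-1$ (the \L{}ukasiewicz condition). There are $\binom{n+1}{r_0,\dots,r_n}$ words with this content, and since a length-$(n+1)$ word with step-sum $-1$ admits no nontrivial cyclic symmetry, each rotation class has exactly $n+1$ elements; by the cycle lemma exactly one rotation in each class satisfies the \L{}ukasiewicz condition. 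Hence the number of such trees is $\frac{1}{n+1}\binom{n+1}{r_0,\dots,r_n}$, matching the evaluation above.

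Next I would transport this count to the other three families by bijections sending a vertex, block, or ascent of ``size $i$'' to one of size $i$. For part (3), read the preorder degree word $d_1\cdots d_{n+1}$ (with $d_{n+1}=0$) and form the path $U^{d_1}DU^{d_2}D\cdots U^{d_{n+1}}D$ with its final $D$ deleted; the \L{}ukasiewicz condition makes this a Dyck path of length $2n$, each factor $U^{d_j}$ with $d_j\ge 1$ is a maximal ascent of length $d_j$, and so the ascent type is $(r_1,\dots,r_n)$. For part (1), I would use the recursive ``block of the minimum'' decomposition: if the block containing $1$ has size $k$, make the root a vertex with $k$ children and recurse noncrossingly on the $k$ intervals cut out by that block. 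This produces a plane tree on $n+1$ vertices in which each block of size $i$ becomes an internal vertex of downdegree $i$ (so the downdegree sequence is $(r_0,r_1,\dots,r_n)$), and it is manifestly invertible.

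The remaining and hardest case is part (2). Fixing the opener/closer/transient/singleton pattern of a partition and re-matching arcs in FIFO rather than LIFO order changes the multiset of block sizes (for the pattern $O,O,T,T,C,C$ one obtains block sizes $\{3,3\}$ in one order and $\{4,2\}$ in the other), so there is no naive pattern-preserving bijection between nonnesting and noncrossing partitions. I would instead invoke the classical fact that noncrossing and nonnesting partitions of $[n]$ are equidistributed by type, which reduces part (2) to part (1). Establishing that this equidistribution respects the full type vector $\mathbf{r}$, and not merely the number of blocks $|\mathbf{r}|$, is the main obstacle and is where I expect the genuine work to lie; the tree, Dyck path, and noncrossing cases above are by comparison routine.
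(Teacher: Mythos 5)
The paper never actually proves Theorem 1.1; it cites Kreweras for Part 1 and Athanasiadis, Dershowitz--Zaks, and Armstrong--Eu for the equivalences with Parts 2, 3, and 4. Measured against that, your proposal is a genuinely self-contained route, and for Parts 1, 3, and 4 it is complete and correct: the evaluation $A_{\mathbf{r}}(1;\mathbf{v})=\frac{1}{n+1}{n+1 \choose r_0,r_1,\dots,r_n}$ with $r_0=n-|\mathbf{r}|+1$ is right; the cycle-lemma count of \L{}ukasiewicz words is right (a word of length $n+1$ with step-sum $-1$ cannot be a nontrivial $d$-th power, since the step-sum of a $d$-th power is divisible by $d$); and both transports --- preorder degree word to Dyck path, and the block-of-the-minimum recursion for noncrossing partitions --- preserve the size data exactly as you claim.

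The one real gap is the one you flag yourself: Part 2. Your observation that the opener/closer/transient pattern does not determine type is correct (the $\{3,3\}$ versus $\{4,2\}$ computation checks out), but you do not need to import the full Athanasiadis equidistribution, and the missing step is precisely the device the paper itself uses to prove Theorem 2.2. Encode a partition $\pi$ of $[n]$ by the word $w_1\cdots w_n$ with $w_i=x_j$ when $i$ is the minimum of a block of size $j$ and $w_i=x_0$ otherwise. A nonnesting partition is uniquely reconstructible from this word by assigning each non-minimum to the earliest-opened block that is not yet full (this is \cite[Solution to Exercise 5.44]{StanEC2}), and a noncrossing partition by assigning it to the most-recently-opened such block; in both cases the image is precisely the set of words of content $(n-|\mathbf{r}|,r_1,\dots,r_n)$ all of whose prefixes have nonnegative weight under $\omega$, which, after appending a single $x_0$, is exactly the set of \L{}ukasiewicz words your cycle lemma already counts. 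With that paragraph added, Part 2 becomes no harder than Part 1, and the entire theorem sits inside the one word-counting framework you set up for the trees.
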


Part 1 of Theorem 1.1 is due to Kreweras \cite[Theorem 4]{Kreweras}.  
A type-preserving
bijection showing
the equivalence of Parts 1 and 2 was discovered by Athanasiadis 
\cite[Theorem 3.1]{AthNC}.  A similar bijection showing the equivalence
of Parts 1 and 3 was proven by Dershowitz and Zaks \cite{DZ}.  Armstrong
and Eu \cite[Lemma 3.2]{ArmEu} give an example of a bijection proving the 
equivalence of Parts 1 and 4.
  
The rest of this paper is organized as follows.
In Section 2 we
prove an analogous product formula (Theorem 2.2)
which counts connected objects according to type.  
The proof of Theorem 2.2 is bijective and relies on certain properties
of words in monoids.
We extend this
result to another product formula (Theorem 2.3) which counts objects
which have a fixed number of connected components according to type.  These
product formulas have found a geometric application in \cite{ArmRho} where
they are used to count regions of hyperplane arrangements related to 
the Shi arrangement according to `ideal dimension' in the sense
of Zaslavsky \cite{ZIdeal}.  We then apply our product formulas
to the theory of symmetric functions, refining a formula of
Stanley
\cite{StanPark}.  In Section
3 we present an alternative proof of Theorem 2.3
communicated to us by Christian Krattenthaler \cite{KrattComm}
which uses generating functions and Lagrange inversion.

\section{Main Results}

The proofs of our product formulas will rest on a lemma about
words in monoids which can be viewed as a `connected analog'
of the `cycle lemma' due to Dvoretzky and Motzkin \cite{DM}
(see also \cite{DZCycle}).  
For a more leisurely introduction
to this material, see \cite{StanEC2}.

Let $\mathcal{A}$ 
denote the infinite alphabet $\{x_0, x_1, x_2, \dots \}$ and let 
$\mathcal{A}^*$ 
denote the free (noncommutative) monoid generated by 
$\mathcal{A}$.  Denote
the empty word by $e \in \mathcal{A}^*$.  
The \emph{weight function} is the monoid
homomorphism $\omega: \mathcal{A}^* \rightarrow (\mathbb{Z}, +)$ induced by
$\omega(x_i) = i-1$ for all $i$.  We define a subset 
$\mathcal{B} \subset \mathcal{A}^*$ by
\begin{equation*}
\mathcal{B} = \{ w = w_1 \dots w_n \in \mathcal{A}^* \,|\, \omega(w) = 1, 
\text{$\omega(w_1 w_2 \dots w_j) > 0$ for $1 \leq j \leq n$} \}.
\end{equation*}
That is, a word $w \in \mathcal{A}^*$ is contained in 
$\mathcal{B}$ if and only if
it has weight $1$ and all of its nonempty prefixes have positive weight.
In particular, we have that $e \notin \mathcal{B}$.  

Given any word $w = w_1 \dots w_n \in \mathcal{A}^*$, 
a \emph{conjugate} of $w$
is an element of $\mathcal{A}^*$ of the form 
$w_i w_{i+1} \dots w_n w_1 w_2 \dots w_{i-1}$ for some $1 \leq i \leq n$
(this is the monoid-theoretic analog of conjugation in groups).  We have 
the following result concerning conjugacy classes of elements of 
$\mathcal{B}$.  It
is our `connected analog of' \cite[Lemma 5.3.7]{StanEC2} and is an analog of
the `cycle lemma' in tree enumeration.

\begin{lem}
A word $w \in \mathcal{A}^*$ is conjugate to an element of 
$\mathcal{B}$ if and only if
$\omega(w) = 1$, in which case $w$ is conjugate to a unique element of
$\mathcal{B}$ and the 
conjugacy class of $w$ has size equal to the length of $w$.
\end{lem}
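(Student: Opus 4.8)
The plan is to recognize this as the ``$k=1$'' case of the Dvoretzky--Motzkin cycle lemma and to prove it directly via the partial-sum walk of $w$. The ``only if'' direction and the necessity of $\omega(w) = 1$ are immediate: since $(\mathbb{Z}, +)$ is abelian and $\omega$ is a monoid homomorphism, $\omega(uv) = \omega(u) + \omega(v) = \omega(vu)$, so every conjugate of $w$ has the same weight as $w$. Hence if some conjugate lies in $\mathcal{B}$ then $\omega(w) = 1$, and conversely I must produce a conjugate in $\mathcal{B}$ under the sole hypothesis $\omega(w) = 1$. This reduces the lemma to two assertions about a word $w = w_1 \cdots w_n$ of weight $1$: (i) exactly one of its $n$ cyclic rotations lies in $\mathcal{B}$, and (ii) these $n$ rotations are pairwise distinct.

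For (i) I would pass to the partial-sum sequence. Set $s_0 = 0$ and $s_j = \omega(w_1 \cdots w_j)$, so that $s_n = 1$, and extend this to a bi-infinite sequence by the quasi-periodicity rule $s_{k+n} = s_k + 1$ (consistent because one full period adds $\omega(w) = 1$). Membership in $\mathcal{B}$ of the rotation beginning at position $i+1$ is the requirement that every nonempty prefix have positive weight, and a short computation rewrites this as $s_k > s_i$ for all $i < k \le i+n$. Because $s$ increases by exactly $1$ per period, this finite condition upgrades automatically to $s_k > s_i$ for \emph{all} $k > i$; so ``$i$ is a valid starting point'' is precisely the statement that the walk never revisits the level $s_i$ after time $i$.

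The crux of the argument --- and the step I expect to be the main obstacle --- is showing that exactly one index $i \in \{0,1,\dots,n-1\}$ has this ``strict future minimum'' property. For existence, I would use that $s_k \to +\infty$ as $k \to +\infty$ (the value grows by $1$ each period), so the global minimum of $s$ over $k \ge 0$ is attained, equals the minimum over a single period, and is attained only at indices in $\{0,\dots,n-1\}$; taking $i^{*}$ to be the \emph{largest} such index makes $s_k > s_{i^{*}}$ for every $k > i^{*}$, so $i^{*}$ is valid. For uniqueness, suppose $i_1 < i_2$ were both valid in $\{0,\dots,n-1\}$. Validity of $i_1$ forces $s_{i_1} < s_{i_2}$, while validity of $i_2$ applied to $k = i_1 + n > i_2$ forces $s_{i_2} < s_{i_1+n} = s_{i_1} + 1$; since all $s_k$ are integers this yields $s_{i_2} \le s_{i_1}$, a contradiction. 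The only genuinely delicate points are getting the prefix-to-walk translation exactly right and exploiting integrality in the uniqueness step.

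Finally, for (ii) I would observe that $w$ cannot be a proper power: if $w = u^d$ with $d \ge 2$ then $\omega(w) = d\,\omega(u)$, forcing $d \mid 1$, which is impossible. A primitive word of length $n$ has $n$ distinct cyclic rotations, so the conjugacy class of $w$ has size exactly $n$, as claimed. Combining (i) and (ii) exhibits the unique element of $\mathcal{B}$ inside a conjugacy class of the stated size.
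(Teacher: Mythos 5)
Your proof is correct, but it follows a genuinely different route from the paper's. The paper establishes the conjugacy-class size the same way you do (a proper power $w=v^{d}$ would force $d\mid\omega(w)=1$), and handles the "only if" direction identically, but for existence of a conjugate in $\mathcal{B}$ it argues structurally: a case analysis on which letters occur in $w$, a circular-word argument producing a conjugate of the form $x_{s+1}x_0^{s}v$, and then induction on length applied to $v$. You instead run the classical Dvoretzky--Motzkin partial-sum argument: translate membership in $\mathcal{B}$ of the rotation starting after position $i$ into the condition $s_k>s_i$ for all $k>i$ on the quasi-periodic walk $s_{k+n}=s_k+1$, get existence from the last minimizer over one period, and get uniqueness from integrality ($s_{i_2}<s_{i_1}+1$ forces $s_{i_2}\le s_{i_1}$). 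Your translation of the prefix condition, the upgrade from one period to all $k>i$, and the uniqueness step are all sound. One point in your favor: the paper's case analysis only \emph{constructs} a conjugate in $\mathcal{B}$ and never explicitly verifies that it is the unique one (uniqueness is what makes the count $|\mathcal{B}(\mathbf{r})|=\frac{1}{n-1}\binom{n-1}{n-|\mathbf{r}|-1,r_1,\dots,r_n}$ work in Theorem 2.2), whereas your argument delivers existence and uniqueness in one stroke. What the paper's approach buys instead is an explicit recursive decomposition of words in $\mathcal{B}$, which is in the spirit of its remarks on the monoid $\mathcal{B}^{*}$ being very pure; what yours buys is brevity and a complete proof of the uniqueness clause.
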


\begin{proof}
Let $w \in \mathcal{B}$ have length $n$ and suppose the conjugacy
class of $w$ has size $k | n$.  Then we can write
$w = v^{n/k}$ for some $v \in \mathcal{A}^*$.  Since $v$ is a nonempty
prefix of $w$, we have $\omega(v) > 0$ and the fact that
$1 = \omega(w) = \frac{n}{k} \omega(v)$ forces $k = n$.

Since conjugation does not affect weight,
every element $w$ of the conjugacy class of an element of 
$\mathcal{B}$
satisfies
$\omega(w) = 1$.  

Suppose that $w \in \mathcal{A}^*$ satisfies $\omega(w) = 1$. 
We show that $w$ is conjugate to an element of $\mathcal{B}$.  
The proof of this fact
breaks up into three cases depending on the letters
which occur in $w$.

\noindent
\emph{Case 1:  $w$ contains no occurrences of $x_0$.}
Since $\omega(w) = 1$, in this case $w$ must be of the form 
$x_1 \dots x_1 x_2 x_1 \dots x_1$ and $w$ is conjugate to
$x_2 x_1 \dots x_1 \in \mathcal{B}$.  

\noindent
\emph{Case 2:  $w$ contains at most one occurrence of a letter 
other than $x_0$.}
In this case, the
condition $\omega(w) = 1$ forces a conjugate of 
$w$ to be of the form $x_s x_0^{s-2} \in \mathcal{B}$ for
some $s > 1$.  

\noindent
\emph{Case 3: $w$ at least one occurrence of $x_0$ and at least two
occurrences of letters other than $x_0$.}
We claim that
there exists a conjugate $w'$ of $w$ of the form 
$w' = x_{s+1} x_0^s v$ for some $s \geq 0$.  If this were not the case, 
consider the word $w$ written around a circle.  Every maximal contiguous
string of $x_0$'s in $w$ of length $\ell$ must be preceded by a letter
of the form $x_s$ for some $s > \ell + 1$.  The weight
of any such contiguous string taken together with its 
preceding letter is
$\omega(x_s x_0^{\ell}) = s - 1 - \ell > 0$.  Since $\omega(w) = 1$, it
follows that $w$ has a conjugate of the form $x_s x_0^{s-2}$, which
contradicts our assumption that $w$ has at least two occurrences of
a letter other
than $x_0$.  
Let $w'$ be a conjugate of $w$ of the form $w' = x_{s+1} x_0^s v$ for
some $v \in \mathcal{A}^*$.  
Since $1 = \omega(w) = \omega(w') = s -s + \omega(v)$,
by induction 
on length
we can assume that a conjugate of $v$ is contained in
$\mathcal{B}$.  Say that $v = yz$ such that $zy \in \mathcal{B}$ with 
$z \neq e$.  Then $z x_{s+1} x_0^s y$ is a conjugate of
$w' = x_{s+1} x_0^s yz$ satisfying $z x_{s+1} x_0^s y \in \mathcal{B}$.
\end{proof}

Let $\mathcal{B}^*$ denote the submonoid of $\mathcal{A}^*$
generated by $\mathcal{B}$.
In view of \cite[Lemma 5.3.7]{StanEC2}, it is tempting to guess that any 
element $w \in \mathcal{A}^*$ obtained by permuting the letters of 
an element of $\mathcal{B}^*$ is itself conjugate to an element of 
$\mathcal{B}^*$.  
However, this is false.  For example, the element
$x_3 x_0 x_2 = (x_3 x_0) (x_2) \in \mathcal{A}^*$ is contained
in $\mathcal{B}^*$
but $x_3 x_2 x_0$ has no conjugate
in $\mathcal{B}^*$.
(However, the analog of \cite[Lemma 5.3.6]{StanEC2} does hold in this
context - the monoid $\mathcal{B}^*$ is very pure.)
Lemma 2.1 is the key tool we will use in proving our connected analog
of Theorem 1.1.

\begin{thm}
Let $n \geq 1$, let $\mathbf{v} = (1,2,\dots,n)$, and suppose
$\mathbf{r} = (r_1, \dots, r_n) \in \mathbb{N}^n$ satisfies
$\mathbf{r \cdot v} = n$.  

The polynomial evaluation
$-A_{\mathbf{r}}(-1;\mathbf{v}) = [-A_{\mathbf{r}}(x;\mathbf{v})]_{x = -1}$
is equal to
\footnote{In the case where $n > 1$ and $\mathbf{r} \neq (n,0,\dots,0)$, this
can also be expressed as $\frac{(n-2)!}{(n-|\mathbf{r}|-1)! \mathbf{r}!}$.}
the cardinality of:\\
1.  the set of connected noncrossing partitions of $[n]$ of type
$\mathbf{r}$; \\
2.  the set of connected nonnesting partitions of $[n]$ of type
$\mathbf{r}$; \\
3.  the set of Dyck paths of length $2n$ with no returns and ascent
type $\mathbf{r}$; \\
4.  the set of plane trees with a terminal rooted twig and $n+1$
vertices with downdegree sequence $(n-|\mathbf{r}|+1, r_1, \dots, r_n)$. \\
\end{thm}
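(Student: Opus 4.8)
The plan is to prove the four cardinalities all equal the common value $\frac{(n-2)!}{(n-|\mathbf{r}|-1)!\,\mathbf{r}!}$ (equivalently $-A_{\mathbf{r}}(-1;\mathbf{v})$) by reducing to a single class and then counting that class with Lemma 2.1. First I would invoke the type-preserving bijections behind Theorem 1.1 -- Athanasiadis for Parts 1--2, Dershowitz--Zaks for Parts 1 and 3, and Armstrong--Eu for Parts 1 and 4 -- and check that each of them also matches up the three incarnations of connectivity: a connected bump diagram, a Dyck path with no returns, and a plane tree with a terminal rooted twig. Granting this, it suffices to enumerate one class, and I would work with whichever is most convenient for passing to words, say Dyck paths of length $2n$ with no returns (or, equivalently, plane trees with a terminal rooted twig).

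The heart of the argument is an encoding. I would construct a bijection $\phi$ from the connected objects of type $\mathbf{r}$ to the set of words in $\mathcal{B}$ in which the letter $x_i$ occurs exactly $r_i$ times for each $i\geq 1$ and the letter $x_0$ occurs exactly $n-1-|\mathbf{r}|$ times. Such a word has length $n-1$, and since $\sum_{i\geq 1} i\,r_i=\mathbf{r}\cdot\mathbf{v}=n$ its weight is $\sum_i (i-1)s_i = n-(n-1)=1$, so membership in $\mathcal{B}$ is governed precisely by the prefix condition. The design requirement on $\phi$ is that the local type data of the object (ascent lengths, or downdegrees) become the letter multiplicities after the degree shift $x_i\mapsto i-1$, and -- crucially -- that the \emph{connectivity} of the object translate into the statement that every nonempty prefix of $\phi$ has positive weight. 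Informally, a proper prefix of weight $\leq 0$ would exhibit a place where the object splits off a complete sub-object, i.e.\ a return of the Dyck path or a disconnection of the bump diagram, so connectivity is exactly the assertion that no such prefix exists.

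With $\phi$ in hand the count is immediate from Lemma 2.1. Fix the letter multiset $M$ with $s_0=n-1-|\mathbf{r}|$ and $s_i=r_i$ for $i\geq 1$. The number of arrangements of $M$ is the multinomial
\[
\frac{(n-1)!}{(n-1-|\mathbf{r}|)!\,\mathbf{r}!}.
\]
Every arrangement has weight $1$, so by Lemma 2.1 its conjugacy class has size exactly $n-1$ and contains a unique element of $\mathcal{B}$; since cyclic rotation preserves the multiset $M$, these classes partition the arrangements of $M$ into blocks of size $n-1$, each contributing one $\mathcal{B}$-word. Hence the number of connected objects of type $\mathbf{r}$ is
\[
\frac{1}{n-1}\cdot\frac{(n-1)!}{(n-1-|\mathbf{r}|)!\,\mathbf{r}!}=\frac{(n-2)!}{(n-|\mathbf{r}|-1)!\,\mathbf{r}!},
\]
which is the footnote's value for $-A_{\mathbf{r}}(-1;\mathbf{v})$.

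Finally I would dispatch the degenerate cases excluded by the footnote directly from the polynomial, since the displayed factorials are not valid there. For $n=1$ one has $A_{(1)}(x;\mathbf{v})=x$, so $-A_{(1)}(-1;\mathbf{v})=1$, matching the single connected object on $[1]$. For $\mathbf{r}=(n,0,\dots,0)$ with $n>1$ the polynomial $A_{\mathbf{r}}(x;\mathbf{v})$ carries a factor vanishing at $x=-1$, so $-A_{\mathbf{r}}(-1;\mathbf{v})=0$; and indeed an all-singleton object on $[n]$ with $n>1$ (empty bump diagram, the path $(UD)^n$, a root-to-leaf path) is never connected, so the count is $0$. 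I expect the main obstacle to be the construction of $\phi$: pinning down an encoding whose prefix-positivity condition matches connectivity on the nose while carrying type $\mathbf{r}$ to the shifted letter statistics $x_0^{\,n-1-|\mathbf{r}|}x_1^{\,r_1}\cdots x_n^{\,r_n}$, together with the companion check that the classical type-preserving bijections of Theorem 1.1 respect the three different notions of connectivity. The enumerative step, by contrast, is a direct application of Lemma 2.1.
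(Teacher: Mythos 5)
Your proposal is correct and follows essentially the same route as the paper: encode the objects as length-$(n-1)$ words with letter multiset $x_0^{\,n-|\mathbf{r}|-1}x_1^{\,r_1}\cdots x_n^{\,r_n}$, identify connectivity with the prefix-positivity condition defining $\mathcal{B}$, and divide the multinomial count of arrangements by $n-1$ via Lemma 2.1, handling $\mathbf{r}=(n,0,\dots,0)$ separately. The only organizational difference is that you route Parts 1--4 through the classical type-preserving bijections before encoding, whereas the paper writes down an explicit (and essentially identical) word encoding for each of the four classes directly.
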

\begin{proof}
The line of reasoning which we follow here should be compared to that in
\cite[Chapter 5]{StanEC2}.  

Observe first that when $\mathbf{r} = (n,0,\dots,0)$ we have that 
\begin{equation}
-A_{\mathbf{r}}(-1;\mathbf{v}) = \begin{cases}
1 & \text{if $n = 1$,} \\
0 & \text{if $n > 1$.}
\end{cases}
\end{equation}
This is in agreement with the relevant set cardinalities, so from now on
we assume that $n > 1$ and $\mathbf{r} \neq (n,0,\dots,0)$.
Let $\mathcal{B}(\mathbf{r})$ denote the set of 
length $n-1$
words $w \in \mathcal{B}$
with $n-|\mathbf{r}|-1$ $x_0$'s, $r_1$ $x_1$'s, $\dots$, and $r_n x_n$'s.  
By Lemma 2.1, we have that
\begin{equation}
|\mathcal{B}(\mathbf{r})| = \frac{1}{n-1}{n-1 \choose 
n - |\mathbf{r}| - 1, r_1, \dots, r_n} = -A_{\mathbf{r}}(-1;\mathbf{v}),
\end{equation}
where the second equality follows from the definition of 
$A_{\mathbf{r}}(x;\mathbf{v})$.  Therefore, it suffices to biject
each of the sets in Parts 1-4 with the set 
$\mathcal{B}(\mathbf{r})$.  We present a bijection in each case.

1.  Let $NC(\mathbf{r})$ be the set of noncrossing partitions we wish
to enumerate.  Given any partition $\pi$ of $[n]$, define a word
$\psi(\pi) = w_1 w_2 \dots w_{n-1} \in \mathcal{A}^*$ as follows.  For
$1 \leq i \leq n-1$, if $i$ is the minimal element of a block of $\pi$, 
let $w_i = x_j$ where $j$ is the size of the block containing $i$.  
Otherwise, let $w_i = x_0$.  For example, if $\pi$ is the connected
nonnesting partition of $[13]$ shown on the top of Figure 1.1,
we have that 
$\psi(\pi) = x_3 x_4 x_1 x_1 x_0 x_0 x_0 x_0 x_2 x_2 x_0 x_0$.
It is easy to see that the mapping $\pi \mapsto \psi(\pi)$ sets up a
bijection between set partitions in $NC(\mathbf{r})$ and words in
$\mathcal{B}(\mathbf{r})$.
%\footnote{Dershowitz and Zaks \cite{DZ} proved that there exists a 
%bijection between
%plane trees on $n+1$ vertices with downdegree sequence 
%$(n-|\mathbf{r}|+1,r_1, \dots, r_n)$ and noncrossing partitions of 
%$[n]$ of type $\mathbf{r}$ obtained by labeling the nonroot 
%vertices of a tree $T$ in preorder and letting vertices with the same
%parent be blockmates.  Therefore, Part 1 is deducible
%from Part 3.}

2.  Let $NN(\mathbf{r})$ be the set of nonnesting partitions we wish
to enumerate.  
It is easy to verify (see 
\cite[Solution to Exercise 5.44]{StanEC2}) that the map
$\psi$ from the proof of Part 1 restricts to a bijection between
$NN(\mathbf{r})$ and $\mathcal{B}(\mathbf{r})$.

3.  Let $\mathbb{D}$ be a Dyck path with no returns of length $2n$ 
and ascent type $\mathbf{r}$.  Define a length $n-1$ word
$\delta(\mathbb{D}) \in \mathcal{A}^*$ as follows.  Let 
$w_1 w_2 \dots w_n \in \mathcal{A}^*$ be the word obtained by reading
$\mathbb{D}$ from left to right, replacing every ascent of length
$i$ with $x_i$ and replacing every maximal contiguous sequence of downsteps
of length $\ell$ with $x_0^{\ell-1}$.  Set 
$\delta(\mathbb{D}) := w_1 w_2 \dots w_{n-1}$.  For example, if $\mathbb{D}$
is the Dyck path shown in Figure 1.1 we have that
$\delta(\mathbb{D}) = x_3 x_0 x_4 x_1 x_1 x_0 x_2  x_0^3 x_2 x_0$.  It is
easy to verify that 
$\delta(\mathbb{D}) \in \mathcal{B}(\mathbf{r})$ 
and that the map
$\mathbb{D} \mapsto \delta(\mathbb{D})$ sets up a bijection between
Dyck paths with no returns of length $2n$ and ascent type 
$\mathbf{r}$ to $\mathcal{B}(\mathbf{r})$. 

4.  For $T$ be a plane tree on $n+1$ vertices with a terminal rooted twig,
let $w_1 w_2 \dots w_{n+1} \in \mathcal{A}^*$ be the word obtained by
setting $w_i = x_j$, where $j$ is the downdegree of the $i^{th}$ vertex
of $T$ in preorder.  Since $T$ has a terminal rooted twig, we have
$w_n = w_{n+1} = x_0$.  Set 
$\chi(T) := w_1 w_2 \dots w_{n-1} \in \mathcal{A}^*$.
For example, if $T$ is the tree shown in Figure 1.1, we have
that $\chi(T) = x_3 x_4 x_1 x_1 x_0 x_0 x_0 x_0 x_2 x_2 x_0 x_0$.
The mapping $T \mapsto \chi(T)$ sets up a bijection between the
set of trees of interest and $\mathcal{B}(\mathbf{r})$.
\end{proof}

An alternative proof of Parts 1 and 2 of Theorem 2.2 which relies
on a product formula enumerating noncrossing partitions by `reduced 
type' due to Armstrong and Eu \cite{ArmEu} (which in turn relies on
the original enumeration of noncrossing partitions by type due to
Kreweras) can be found in \cite{ArmRho}.

It is natural to ask if the formula in Theorem 2.2 can be generalized to
the case of multiple connected components.  The answer is `yes', and 
to avoid enforcing nonstandard conventions in degenerate cases
we will again
state the relevant product formula in terms of a polynomial specialization.
Suppose that $\mathbf{r}, \mathbf{v} \in \mathbb{N}^n$ and $1 \leq m \leq 
|\mathbf{r}|$.  We define the polynomial $A^{(m)}_{\mathbf{r}}(x;\mathbf{v})
\in \mathbb{R}[x]$ by
\begin{equation}
A^{(m)}_{\mathbf{r}}(x;\mathbf{v}) = \frac{(|\mathbf{r}|-1)!}
{(|\mathbf{r}|-m)!}
\frac{x}{x + \mathbf{r \cdot v}}
\frac{(x + \mathbf{r \cdot v})_{|\mathbf{r}|-m+1}}{\mathbf{r!}}.
\end{equation}
Observe that in the case $m = 1$ we have $A^{(1)}_{\mathbf{r}}(x;\mathbf{v}) 
= A_{\mathbf{r}}(x;\mathbf{v})$.

\begin{thm}
Let $n \geq m \geq 1$ and let $\mathbf{v} = (1,2,\dots,n) \in \mathbb{N}^n$.
Suppose that $\mathbf{r} = (r_1, \dots, r_n) \in \mathbb{N}^n$ satisfies 
$\mathbf{r \cdot v} = n$ and $m \leq |\mathbf{r}|$.  

The polynomial 
evaluation $-A^{(m)}_{\mathbf{r}}(-m;\mathbf{v}) 
= [-A^{(m)}_{\mathbf{r}}(x;\mathbf{v})]_{x = -m}$ 
is equal to
\footnote{In the case where $n > m$ and $\mathbf{r} \neq (n,0,\dots,0)$, this
can also be expressed as 
$\frac{m(n-m-1)!(|\mathbf{r}|-1)!}{(n-|\mathbf{r}|-1)!(|\mathbf{r}|-m)! \mathbf{r}!}$.}
the cardinality of:\\
1. the set of noncrossing partitions of $[n]$ with exactly $m$ 
connected components of type $\mathbf{r}$;\\
2. the set of nonnesting partitions of $[n]$ with exactly $m$
connected components of type $\mathbf{r}$;\\
3. the set of Dyck paths of length $2n$ with exactly $m-1$ returns of 
ascent type $\mathbf{r}$;\\
4. the set of plane forests with $n + m$ vertices and exactly $m$ trees with
downdegree sequence $(n-|\mathbf{r}|+m,r_1,\dots,r_n)$ such that
every tree has a terminal rooted twig. 
\end{thm}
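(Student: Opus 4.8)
\noindent\emph{Proof proposal.}
The plan is to run the same four-way reduction as in the proof of Theorem 2.2, sending each family of $m$-component objects to one set of words and then enumerating that set. Define $\mathcal{B}^{(m)}(\mathbf{r})$ to be the set of words of length $n-m$ built from $n-|\mathbf{r}|-m$ copies of $x_0$ and $r_i$ copies of $x_i$ which factor as a product of exactly $m$ elements of $\mathcal{B}$. Since $\omega(x_i)=i-1$, such a word automatically has weight $m$ (one unit per factor), and its length $n-m$ and zero-count $n-|\mathbf{r}|-m$ are exactly those read off from the footnote. I would first check that every $m$-component object decomposes canonically into $m$ connected pieces carried by consecutive intervals (for partitions and Dyck paths this is the finest cut into connected, respectively primitive, pieces; for forests it is simply the list of trees), and that applying the Theorem 2.2 encodings $\psi,\delta,\chi$ to each piece and concatenating lands in $\mathcal{B}^{(m)}(\mathbf{r})$.

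The structural point that makes this reduction a bijection is the purity remark recorded just after Lemma 2.1: a word in $\mathcal{B}^*$ admits a unique factorization into elements of $\mathcal{B}$, and the number of factors equals its weight. (This is immediate once one knows $\mathcal{B}$ is a code: if a shortest discrepancy occurred between two factorizations, it would force a nonempty prefix of a $\mathcal{B}^*$-element to have weight $0$, contradicting positivity of prefixes.) Consequently an $m$-component object is recovered from its word by taking the unique $\mathcal{B}$-factorization into $m$ pieces and inverting $\psi,\delta,\chi$ piecewise, and this matches the canonical decomposition into connected components, because by Theorem 2.2 a single element of $\mathcal{B}$ corresponds to a connected (respectively primitive, single-tree) object. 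This reduces all four parts to the one enumeration $|\mathcal{B}^{(m)}(\mathbf{r})| = -A^{(m)}_{\mathbf{r}}(-m;\mathbf{v})$.

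The crux, and the step I expect to be the main obstacle, is counting $\mathcal{B}^{(m)}(\mathbf{r})$ itself. For $m=1$ this is exactly Lemma 2.1: each cyclic class of a weight-$1$ word contains precisely one element of $\mathcal{B}$, producing the clean factor $\tfrac{1}{n-1}$. For $m\ge 2$ no such uniform cyclic count survives: as the excerpt already notes, $x_3x_0x_2\in\mathcal{B}^*$ while the content-equivalent $x_3x_2x_0$ has no conjugate in $\mathcal{B}^*$, so the number of good rotations is not constant across words of a given content and the naive cycle lemma fails. I would instead count the ordered tuples $(u_1,\dots,u_m)\in\mathcal{B}^m$ of fixed total content by a pointed, first-return argument run on the forest (\L ukasiewicz) side, where validity of a code is a one-sided threshold condition and hence insensitive to large up-steps (unlike the ``visit every intermediate level'' condition defining $\mathcal{B}^*$), and then bookkeep the terminal-rooted-twig constraint together with the $m-1$ internal component boundaries. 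A useful sanity check here is that $|\mathcal{B}^{(m)}(\mathbf{r})|$ is visibly the convolution $\sum \prod_{j}|\mathcal{B}(\mathbf{r}^{(j)})|$ over ordered factorizations $\mathbf{r}^{(1)}+\cdots+\mathbf{r}^{(m)}=\mathbf{r}$ of the type, which is precisely the sort of multinomial convolution the polynomials $A_{\mathbf{r}}(x;\mathbf{v})$ were designed to evaluate; the falling-factorial corrections $(|\mathbf{r}|-1)\cdots(|\mathbf{r}|-m+1)$ and $1/\bigl((n-|\mathbf{r}|-1)\cdots(n-|\mathbf{r}|-m+1)\bigr)$ in the footnote are exactly what the boundary bookkeeping must generate, and verifying them is where the genuine work lies.

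Finally I would reconcile the count with the polynomial evaluation. Expanding the falling factorial $(x+\mathbf{r}\cdot\mathbf{v})_{|\mathbf{r}|-m+1}$ at $x=-m$ and using $\mathbf{r}\cdot\mathbf{v}=n$ turns $-A^{(m)}_{\mathbf{r}}(-m;\mathbf{v})$ into the footnote product by a direct computation, so once $|\mathcal{B}^{(m)}(\mathbf{r})|$ is evaluated the theorem follows. As in Theorem 2.2, the degenerate cases must be dispatched separately: when $\mathbf{r}=(n,0,\dots,0)$ or $n=m$ the polynomial either vanishes or reduces to an obvious small count, and I would verify these directly against the four families before invoking the general argument.
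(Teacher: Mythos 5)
Your reduction of all four parts to counting the word set $\mathcal{B}^{(m)}(\mathbf{r})$ is sound and closely parallels what the paper does implicitly (the unique $\mathcal{B}$-factorization you invoke is correct: in a word of $\mathcal{B}^*$ of weight $m$ the break after the $k$-th factor is the last position whose prefix-weight equals $k$, so the factorization is forced). You also correctly observe that the count is the convolution $\sum\prod_j|\mathcal{B}(\mathbf{r}^{(j)})|$ over ordered decompositions $\mathbf{r}^{(1)}+\cdots+\mathbf{r}^{(m)}=\mathbf{r}$ into nonzero parts, and correctly diagnose that no cycle-lemma argument survives for $m\ge 2$. But the proof stops exactly at the point where the theorem actually lives: you never evaluate that convolution, and the substitute you sketch (``a pointed, first-return argument run on the forest side'') is not an argument --- it names a hoped-for mechanism without exhibiting the pointing, the bijection, or the count, and you yourself flag this step as ``where the genuine work lies.'' As it stands the identity $|\mathcal{B}^{(m)}(\mathbf{r})|=-A^{(m)}_{\mathbf{r}}(-m;\mathbf{v})$ is asserted, not proved.

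For comparison, the paper closes this gap algebraically rather than bijectively. It starts from Raney's convolution identity $\sum_{\mathbf{r}^{(1)}+\cdots+\mathbf{r}^{(m)}=\mathbf{r}} A_{\mathbf{r}^{(1)}}(x;\mathbf{v})\cdots A_{\mathbf{r}^{(m)}}(x;\mathbf{v})=A_{\mathbf{r}}(mx;\mathbf{v})$, in which the parts are \emph{allowed to be zero}; specializing at $x=-1$ and using $A_{\mathbf{0}}=1$ together with Theorem 2.2 therefore does not give $C(n,m,\mathbf{r})$ directly but the alternating sum $\sum_{k=1}^{m}(-1)^k\binom{m}{k}C(n,k,\mathbf{r})=A_{\mathbf{r}}(-m;\mathbf{v})$. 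Binomial (inclusion--exclusion) inversion then yields $C(n,m,\mathbf{r})=\sum_{k=1}^{m}(-1)^k\binom{m}{k}A_{\mathbf{r}}(-k;\mathbf{v})$, and a separate binomial-coefficient identity collapses this sum to $-A^{(m)}_{\mathbf{r}}(-m;\mathbf{v})$. None of these three ingredients --- the convolution identity with empty parts, the inversion step, or the collapsing identity --- appears in your proposal, and your restriction to nonzero parts means you cannot quote Raney's identity verbatim either. To complete your argument you would need either to carry out the direct enumeration you allude to, or to import this inclusion--exclusion detour.
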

\begin{proof}
In light of Theorem 2.2, it suffices to prove Part 1.
The polynomial $A_{\mathbf{r}}(mx;\mathbf{v})$ can be obtained via the
following convolution-type identity which follows from 
a result of Raney
\cite[Theorems 2.2,
2.3]{Raney}
and induction:
\begin{equation}
\sum_{\mathbf{r^{(1)}} + \cdots + \mathbf{r^{(m)}} = \mathbf{r}}
A_{\mathbf{r^{(1)}}}(x;\mathbf{v}) \cdots A_{\mathbf{r^{(m)}}}(x;\mathbf{v}) =
A_{\mathbf{r}}(mx;\mathbf{v}),
\end{equation}
where $\mathbf{r^{(i)}} \in \mathbb{N}^n$ for all $i$.
Let $\mathbf{0} \in \mathbb{N}^n$ be the zero vector.
By Theorem 2.2 and the fact that $A_{\mathbf{0}}(x;\mathbf{v}) = 1$, 
we can set $x = -1$ to obtain
\begin{equation}
\sum_{k=1}^m (-1)^k {m \choose k} C(n,k,\mathbf{r}) = 
A_{\mathbf{r}}(-m;\mathbf{v}),
\end{equation}
where $C(n,k,\mathbf{r})$ denotes the number of noncrossing partitions of
$[n]$ with exactly $k$ connected components and type $\mathbf{r}$.  By the
Principle of Inclusion-Exclusion (see \cite{StanEC1}), it follows that 
\begin{equation}
C(n,m,\mathbf{r}) = \sum_{k=1}^m (-1)^k {m \choose k} 
A_{\mathbf{r}}(-k;\mathbf{v}).
\end{equation}
Therefore, it suffices to show that the right hand side of  
Equation 2.6
is equal to $-A^{(m)}_{\mathbf{r}}(-m;\mathbf{v})$.  We sketch 
this verification here for the case $m, |\mathbf{r}| < n$;  the other
degenerate cases are left to the reader.  

We start with the following binomial coefficient identity:
\begin{equation}
\sum_{k=1}^m (-1)^{k+1} k {m \choose k} {n - k - 1 \choose |\mathbf{r}| - 1}
= m {n - m - 1 \choose n - |\mathbf{r}| - 1}.
\end{equation} 
This 
identity can be
obtained by comparing like powers of $x$ on both sides
of the equation 
$r(1+x)^{r+s-1} = (1+x)^s \frac{d}{dx}(1+x)^r = (1+x)^s 
({r \choose 1} + 2 {r \choose 2} x + 3 {r \choose 3} x^2 + \cdots)$.  
Multiplying both sides 
of Equation 2.7
by $\frac{(|\mathbf{r}|-1)!}{\mathbf{r}!}$ and
using the definition of $A_{\mathbf{r}}(x;\mathbf{v})$ we obtain
\begin{equation}
\sum_{k=1}^m (-1)^{k} {m \choose k} A_{\mathbf{r}}(-k;\mathbf{v}) = 
\frac{m (n-m-1)! (|\mathbf{r}|-1)!}
{(n-|\mathbf{r}|-1)!(|\mathbf{r}|-m)!\mathbf{r}!}.
\end{equation}
The right hand side of Equation 2.8 is equal to 
$-A_{\mathbf{r}}^{(m)}(-m;\mathbf{v})$.
\end{proof}

We close this section 
by relating the product formulas in this paper to
Frobenius characters arising from the theory of parking functions.
For $n \geq 0$ a \emph{parking function of length $n$} is a sequence
$(a_1, \dots, a_n)$ of positive integers whose nondecreasing rearrangement
$(b_1, \dots, b_n)$ satisfies $b_i \leq i$ for all $i$.  A nondecreasing
parking function is called \emph{primitive} and primitive parking
functions of length $n$ are in an obvious bijective correspondence
(see \cite{ArmEu}) with Dyck paths of length $2n$.  The \emph{type} of
a parking function is the ascent type of its nondecreasing rearrangement.
A parking function $(a_1, \dots, a_n)$ will be said to have \emph{$m$
returns} if its nondecreasing rearrangement has $m$ returns when viewed
as a Dyck path.

The symmetric group $\mathfrak{S}_n$ acts on the set of parking
functions of length $n$.  Stanley \cite{StanPark} computed the Frobenius
character of this action with respect to the standard bases
(monomial, homogeneous, elementary, power sum, and Schur) of the 
ring of symmetric functions.  To compute this character in the basis
$\{ h_{\lambda} \}$ of complete homogeneous symmetric functions, he 
observed that every orbit $\mathcal{O}$ of this action 
contains a unique primitive parking
function $(b_1, \dots, b_n)$
and that the Frobenius 
character of 
the action of $\mathfrak{S}_n$ on
$\mathcal{O}$ is $h_{\lambda}$, where
$\lambda = (1^{r_1} 2^{r_2} \dots n^{r_n})$ and
$(r_1, \dots, r_n)$ is the type of $(b_1, \dots, b_n)$.  By applying
the formula in Theorem 1.1, one immediately gets the expansion
\begin{equation}
\mathrm{Frob}(P_n) = \sum_{\lambda \vdash n} \frac{n!}
{(n-|\mathbf{r}(\lambda)|+1)!
\mathbf{r}(\lambda)!} h_{\lambda},
\end{equation}
where $\mathrm{Frob}$ is the Frobenius characteristic map, 
$P_n$ is the permutation module for the action of $\mathfrak{S}_n$ on
parking functions of length $n$,
$r_i(\lambda)$ is
the multiplicity of $i$ in $\lambda$ for $1 \leq i \leq n$, and
$\mathbf{r}(\lambda) = (r_1(\lambda), \dots, r_n(\lambda))$.  (See \cite{ArmEu}
for a nonhomogeneous generalization of this formula.)  

Using the same reasoning as in \cite{StanPark} we can compute the 
Frobenius characters of other modules related to parking functions.
In particular, for $m > 0$, the symmetric group
$\mathfrak{S}_n$ acts on the set of parking functions of length $n$
with $m-1$ returns.  Let $P^{(m)}_n$ be the permutation module
corresponding to this action, so that $P_n \cong \bigoplus_{m=0}^{n-1}
P_n^{(m)}$.  Applying Theorem 2.3, we have that the Frobenius
character of this module is
\begin{equation}
\mathrm{Frob}(P^{(m)}_n) = \sum_{\lambda \vdash n} 
-A^{(m)}_{\mathbf{r}(\lambda)}(-m;\mathbf{v}) h_{\lambda},
\end{equation}
where $\mathbf{v} = (1,2,\dots,n) \in \mathbb{N}^n$.

\section{Proof of Theorem 2.3 using Lagrange Inversion}
In this section we outline an alternative proof of Theorem 2.3 using 
generating functions and Lagrange inversion which was pointed out to the 
author by Christian Krattenthaler \cite{KrattComm}.  This method has
the advantage of immediately proving Theorem 2.3 without first proving
the single connected component case of Theorem 2.2.  We only handle
the case of noncrossing partitions.

Let $y = \{ y_1, y_2 , \dots, \}$ 
and $z$ be
commuting variables.  If $\pi$ is a noncrossing partition of $[n]$ for
$n \geq 0$, the \emph{weight} of $\pi$ is the monomial 
\begin{equation}
\mathrm{wt}(\pi) = z^n \prod_{i \geq 1} y_i^{r_i(\pi)},
\end{equation}
where $r_i(\pi)$ is the number of blocks in $\pi$ of size $i$.  (The 
unique partition of $[0]$ has weight $1$.)  We define
$P(z) \in \mathbb{R}(y_1,y_2,\dots)[[z]]$ by grouping these monomials
together in a generating function.  That is,
\begin{equation}
P(z) = \sum_{\pi} \mathrm{wt}(\pi),
\end{equation}
where the sum is over all noncrossing partitions $\pi$.

Given any noncrossing partition $\pi$ of $[n]$ with $n > 1$, if the block
of $\pi$ containing $1$ has size $k$, drawing $\pi$ on a circle one obtains
$k$ (possibly empty) noncrossing partitions `between' each successive pair
of elements in this $k$ element block.  This combinatorial observation
yields the following formula:
\begin{equation}
P(z) = 1 + \sum_{k = 1}^{\infty} y_k z^k P(z)^k.
\end{equation}
Rearranging this expression, we get that
\begin{equation}
\frac{zP(z)}{1 + \sum_{k = 1}^{\infty} y_k z^k P(z)^k} = z,
\end{equation}
and therefore $zP(z)$ is the compositional inverse of 
$\frac{z}{X(z)}$, where $X(z) = 1 + \sum_{k = 1}^{\infty} y_k z^k$.  This
implies that
\begin{equation}
P\left(\frac{z}{X(z)}\right) = X(z).
\end{equation}

In order to prove Theorem 2.3, we need to keep track of the number of 
connected components of a noncrossing partition.  To do this,
let $C(z) \in \mathbb{R}(y_1,y_2,\dots)[[z]]$ the generating function
\begin{equation}
C(z) = \sum_{\pi} \mathrm{wt}(\pi),
\end{equation}
where the sum ranges over all \emph{connected}  
noncrossing
partitions of $[n]$
where $n \geq 1$.  It is immediate that the generating functions
$P(z)$ and $C(z)$ are related by
\begin{equation}
P(z) = \frac{1}{1 - C(z)}
\end{equation}
or equivalently,
\begin{equation}
C(z) = \frac{P(z)  - 1}{P(z)}.
\end{equation}
As in the first proof of Theorem 2.3, let $C(n,m,\mathbf{r})$ denote
the number of noncrossing partitions of $[n]$ with 
exactly $m$ connected components and type $\mathbf{r}$.  It is evident
that
\begin{equation}
C(z)^m = \left( \frac{P(z) - 1}{P(z)} \right)^m = \sum_{n \geq 0}
\sum_{\mathbf{r} \geq \mathbf{0}} C(n,m,\mathbf{r})y^{\mathbf{r}}z^n,
\end{equation}
where the inequality in the inner summation is componentwise and
$y^{\mathbf{r}} = y_1^{r_1} y_2^{r_2} \cdots$ if $\mathbf{r} 
= (r_1, r_2, \dots)$.

To find an expression for $C(n,m,\mathbf{r})$ it is enough to extract the
coefficient of $z^n y^{\mathbf{r}}$ from the generating 
function in Equation 3.9.  We use Lagrange inversion to do this.  
Set $F(z) := \frac{z}{X(z)}$, so that the compositional inverse of $F(z)$
is $F^{\langle -1 \rangle}(z) = z P(z)$.  Also set
$H(z) := \left( \frac{X(z) - 1}{X(z)} \right)^m$.  In light of Equation
3.5 we have the identity 
$\left( \frac{P(z) - 1}{P(z)} \right)^m = H (F^{\langle -1 \rangle}(z))$.
Let $\langle - \rangle$ denote taking a coefficient in a Laurent series.
Applying Lagrange inversion as in \cite[Corollary 5.4.3]{StanEC2} 
we get that 
\begin{align*}
C(n,m,\mathbf{r}) &= \langle z^n y^{\mathbf{r}} \rangle
H (F^{\langle -1 \rangle}(z)) \\
&=  \frac{1}{n} \langle z^{n-1} y^{\mathbf{r}} \rangle 
H'(z) \left( \frac{z}{F(z)} \right)^n  \\
&= \frac{1}{n} \langle z^{n-1} y^{\mathbf{r}} \rangle
m X(z)^{n-m-1} (X(z) - 1)^{m-1} X'(z) \\
&= \frac{m}{n} \langle z^{n-1} y^{\mathbf{r}} \rangle
(X(z) - 1)^{m-1} \sum_{\ell \geq 0} {n - m - 1 \choose \ell} 
(X(z) - 1)^{\ell} X'(z) \\
&= \frac{m}{n} \langle z^{n-1} y^{\mathbf{r}}  \rangle
(X(z) - 1)^{m-1} \sum_{\ell \geq 0} \frac{1}{m + \ell} {n - m - 1 \choose
\ell} \left( (X(z)-1)^{m + \ell} \right)', 
\end{align*} 
where all derivatives are partial derivatives with respect to $z$.
Suppose $\mathbf{r} = (r_1, r_2, \dots)$.  Taking the coefficient in the
bottom line yields the equality
\begin{equation}
C(n,m,\mathbf{r}) = \frac{m}{|\mathbf{r}|} {n-m-1 \choose |\mathbf{r}| - m}
{|\mathbf{r}| \choose r_1, r_2, \dots},
\end{equation}
which is equivalent to Part 1 of Theorem 2.3.

\section{Acknowledgments}
The author is grateful to Drew Armstrong, Christos Athanasiadis, 
Christian Krattenthaler,
Vic Reiner, and Richard Stanley for many helpful conversations.

\bibliography{../bib/my}

\begin{thebibliography}{10}

\bibitem{ArmEu}
{\sc D.~Armstrong and S.-P. Eu}.
\newblock Nonhomogeneous parking functions and noncrossing partitions.
\newblock {\em Electron. J. Combin.\/}, {\bf 15}, 1 (2008).

\bibitem{ArmRho}
{\sc D.~Armstrong and B.~Rhoades}.
\newblock The {S}hi arrangement and the {I}sh arrangement.
\newblock In progress, 2010.

\bibitem{AthNC}
{\sc C.~A. Athanasiadis}.
\newblock On noncrossing and nonnesting partitions for classical reflection
  groups.
\newblock {\em Elecron. J. Combin.\/}, {\bf 5} (1998).
\newblock R42.

\bibitem{DZ}
{\sc N.~Dershowitz and S.~Zaks}.
\newblock Ordered trees and {non-crossing} partitions.
\newblock {\em Discrete Math.\/}, {\bf 62} (1986).

\bibitem{DZCycle}
{\sc N.~Dershowitz and S.~Zaks}.
\newblock The cycle lemma and some applications.
\newblock {\em Europ. J. Combin.\/}, {\bf 11} (1990) pp. 35--40.

\bibitem{DM}
{\sc A.~Dvoretzky and T.~Motzkin}.
\newblock A problem of arrangements.
\newblock {\em Duke Math. J.\/}, {\bf 14} (1947) pp. 305--313.

\bibitem{KrattComm}
{\sc C.~Krattenthaler} (2010).
\newblock Personal communication.

\bibitem{Kreweras}
{\sc G.~Kreweras}.
\newblock Sur les partitions non {crois\'ees} d'un cycle.
\newblock {\em Discrete Math.\/}, {\bf 1}, 4 (1972) pp. 333--350.

\bibitem{Raney}
{\sc G.~Raney}.
\newblock Functional composition patterns and power series inversion.
\newblock {\em Trans. Amer. Math. Soc.\/}, {\bf 94} (1960) pp. 441--451.

\bibitem{StanEC1}
{\sc R.~Stanley}.
\newblock {\em {Enumerative Combinatorics}\/}, vol.~1.
\newblock Cambridge University Press, Cambridge (1997).

\bibitem{StanPark}
{\sc R.~Stanley}.
\newblock Parking functions and noncrossing partitions.
\newblock {\em Electron. J. Combin.\/}, {\bf 4}, 2 (1997).
\newblock R23.

\bibitem{StanEC2}
{\sc R.~Stanley}.
\newblock {\em {Enumerative Combinatorics}\/}, vol.~2.
\newblock Cambridge University Press, Cambridge (1999).

\bibitem{ZIdeal}
{\sc T.~Zaslavsky}.
\newblock Faces of a hyperplane arrangement enumerated by ideal dimension, with
  applications to plane, plaids and {S}hi.
\newblock {\em Geom. Decicata\/}, {\bf 98} (2003) pp. 63--80.

\bibitem{Zeng}
{\sc J.~Zeng}.
\newblock Multinomial convolution polynomials.
\newblock {\em Discrete Math.\/}, {\bf 160} (1996) pp. 219--228.

\end{thebibliography}
\end{document}